\documentclass[11pt]{article}
\usepackage{mathrsfs}
\usepackage{amsfonts}
\usepackage{amsmath}
\usepackage{amssymb}
\usepackage{amsopn}
\usepackage{amsthm}
\usepackage{amstext}

\def\ep{\epsilon}
\def\de{\delta}

\def\pk{C^*(p_1(\ep),\cdots, p_{k}(\ep))}

\def\aa{\mathcal{A}}
\def\oo{\mathcal{O}}
\def\mm{\mathrm{M}}

\def\Pgen{\mathrm{Pgen}}
\newtheorem{lemma}{Lemma}[section]
\newtheorem{theorem}[lemma]{Theorem}
\newtheorem{corollary}[lemma]{Corollary}
\newtheorem{proposition}[lemma]{Proposition}
\newtheorem{definition}[lemma]{Definition}
\newtheorem{remark}[lemma]{Remark}

\newtheorem{example}[lemma]{Example}

\textwidth 151mm \textheight 240mm \hoffset -14mm \voffset -12mm

\begin{document}
\date{}
\title{$C^*$-algebras generated by three projections}
\author{Shanwen Hu\thanks{E-mail: swhu@math.ecnu.edu.cn}\hspace*{2mm} and\
 Yifeng Xue\thanks{E-mail: yfxue@math.ecnu.edu.cn; Corresponding author} \\
 Department of mathematics and Research Center for Operator Algebras\\
 East China Normal University, Shanghai 200241, P.R. China
}

\maketitle

\begin{abstract} \noindent
In this short note, we prove that for a $C^*$-algebra $\aa$ generated by $n$  elements, $\mm_{k}(\tilde{\aa})$
is generated by $k$ mutually unitarily equivalent and almost mutually orthogonal projections for any
$k\ge \de(n)=\min\big\{k\in\mathbb N\,\vert\,(k-1)(k-2)\ge 2n\big\}$. Then combining this result with recent
works of Nagisa, Thiel and Winter on the generators of $C^*$--algebras, we show that for a $C^*$-algebra $\aa$
generated by finite number of elements, there is $d\ge 3$ such that $\mm_d(\tilde A)$ is generated by three mutually
unitarily equivalent and almost mutually orthogonal projections. Furthermore, for certain separable purely infinite
simple unital $C^*$--algebras and $AF$--algebras, we give some conditions that make them be generated by three
mutually unitarily equivalent and almost mutually orthogonal projections.

\vspace{3mm}
\noindent{2000 {\it Mathematics Subject Classification\/}: 46L35, 47D25}\\
 \noindent{\it Key words}:  orthogonal projection, finitely generated $C^*$-algebra, purely infinite simple unital
$C^*$-algebra.

\end{abstract}

\section{Introduction}
Let $H$ be a separable complex Hilbert space with $\dim H=\infty$. Let $P$ and $Q$ be two
(orthogonal) projections on $H$. Put $M=PH$ and $N=QH$. Due to
Halmos \cite{Ha}, $P$ and $Q$ are in generic position if
$$
M\cap N=\{0\},\ M\cap N^\perp=\{0\},\ M^\perp\cap N=\{0\},\ M^\perp\cap N^\perp=\{0\}.
$$
Then the unital $C^*$--algebra generated by two projections $P$ and $Q$, which are in generic position, is
$*$--isomorphic to $\{f\in\mm_2(C(\sigma((P-Q)^2))\vert\, f(0), f(1)\ \text{are diagonal}\}$ (cf. \cite[Theorem 1.1]{V}).
Furthermore,  by \cite[Theorem 1.3]{RS}, the the universal $C^*$--algebra $C^*(p,q)$ generated by two projections $p$
and $q$ is $*$--isomorphic to the $C^*$--algebra
$$
\{f\in M_2(C([0,1]))\,\vert\, f(0),\ f(1)\ \text{are diagonal}\}
$$
which is of Type I. But in the general case of the $C^*$--algebra generated by a finite set of orthogonal projections
(at least three projections), the situation becomes unpredictable. For example, Davis showed in \cite{Da} that there
exist three projections $P_1$, $P_2$ and $P_3$ on $H$ such that the von Neumann algebra $W^*(P_1, P_2, P_3)$ generated
by $P_1$, $P_2$ and $P_3$ coincides with $B(H)$ of all bounded linear operators acting on $H$. Furthermore, Sunder
proved in \cite{Su} that for each $n\ge 3$, there exist $n$ projections $P_1,\cdots,  P_n$ on $H$ such that the von
Neumann algebra $W^*(P_1,\cdots,  P_n)$ generated by $P_1,\cdots,  P_n$ is $B(H)$ and $W^*(\mathcal M)\subsetneqq B(H)$,
whenever $\mathcal M\subsetneqq\{P_1,\cdots,P_n\}$, where $W^*(\mathcal M)$ is the von Neumann algebra generated by all
elements in $\mathcal M$.

Therefore investigating the $C^*$--algebra generated by $n$ $(n\ge 3)$ projections is an interesting topic.
Shulman studied the universal $C^*$--algebras generated by $n$ projections $p_1,\cdots, p_n$ subject to the relation
$p_1+\cdots + p_n =\lambda 1$, $\lambda\in\mathbb R$ in \cite{Sh}. She gave some conditions to make these $C^*$--algebras
type I, nuclear or exact and proved that among these $C^*$--algebras, there is a continuum of mutually
non--isomorphic ones. Meanwhile, Vasilevski considered the problem in \cite{V} that given finite set of (orthogonal)
projections $P,\, Q_1,\cdots,  Q_n$ on $H$ with the conditions
\begin{align}
\label{1a} Q_j&Q_k=\delta_{j,k}Q_k,\qquad j, k =1,\cdots,  n,\quad Q_1+\cdots+Q_n=I,\\
\label{1b} PH\cap&\, (Q_kH)^\perp=\{0\},\ Q_kH\cap (PH)^\perp=\{0\},\ k=1,\cdots, n.
\end{align}
Then what is the $C^*$--algebra $C^*(Q,P_1,\cdots, P_n)$ generated $Q,P_1,\cdots, P_n$? One of interesting
results concerning this problem is Corollary 4.5 of \cite{V}, which can be described as follows.

Let $\aa$ be a finitely generated C*-algebra with identity in $B(H)$ and let $n_0$ be a minimal number of self--adjoint
elements generating $\aa$. Then for each $n>n_0$, there exist projections $P,\, Q_1,\cdots,  Q_n$ on $H$ satisfying
(\ref{1a}) and (\ref{1b}) such that $M_n(\aa)$ is $*$--isomorphic to $C^*(P,Q_1,\cdots, Q_n)$.

Inspired by above works, we study the problem: find least number of projections in the matrix algebra of a given finitely
generated $C^*$--algebra such that these projections generates this $C^*$--algebra in this short note. The main results
of the paper are the following:

Let $\aa=C^*(a_1,\cdots,a_n)$ be the $C^*$--algebra generated by elements $a_1,\cdots, a_n$. Let $\tilde{\aa}$ denote
the $C^*$--algebra obtained by adding the unit $1$ to $\aa$ (if $\aa$ is non--unital) and let $\mm_k(\tilde\aa)$ denote
the algebra of all $n\times n$ matrices with entries in $\tilde\aa$. Then

(1) for any $k\ge\de(n)=\min\big\{k\in\mathbb N\,\vert\,(k-1)(k-2)\ge 2n\big\}$, $\mm_{k}(\tilde{\aa})$ is generated by
$k$ mutually unitarily equivalent and almost mutually orthogonal projections (see Theorem \ref{th1}).

(2) for every $l\ge\{\sqrt{n-1}\}$ and $k\ge 3$, $\mm_{kl}(\tilde {\aa})$ is generated by $k$ mutually unitarily equivalent and
almost mutually orthogonal projections  (see Proposition \ref{prop1}), where $\{x\}$ stands for the least natural number
that is greater than or equal to the positive number $x$.

\section{The main result}

In this section, we will give our main result (1) mentioned in \S 1. Firstly, we have

\begin{lemma}\label{lem1}
Let $\aa$ be a $C^*$-algebra with unit $1$ and $B_{ij}\in\aa$, for any $1\le i<j\le k$.
Suppose that $\eta=\max\{\|B_{ij}\|\,\vert\,1\le i<j\le k\}<\frac{1}{2(k-1)}$, then
$$
 T=\left[\begin{array}{ccccc}
 1&\ B_{12}&\cdots&\ B_{1k}
 \\ B_{12}^* &1& \cdots&B_{2k}
 \\\cdots&\cdots&\cdots&\cdots
 \\B_{1k}^*& B^*_{2k}&\cdots& 1
\end{array}\right]
$$
is invertible and positive, and
$$
\|T-1_k\|\le (k-1)\eta,\quad \|T^{-1/2}-1_k\|\le 2(k-1)\eta,
$$
where $1_k$ is the unit of $\mm_k(\aa)$.
\end{lemma}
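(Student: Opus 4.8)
The plan is to write $T=1_k+E$, where $E=(E_{ij})_{i,j=1}^k\in\mm_k(\aa)$ is the off-diagonal part, defined by $E_{ij}=B_{ij}$ for $i<j$, $E_{ij}=B_{ij}^*$ for $i>j$, and $E_{ii}=0$. By construction $E$ is self-adjoint and every off-diagonal entry satisfies $\|E_{ij}\|\le\eta$. Everything then reduces to the single estimate $\|E\|\le(k-1)\eta$: once this is in hand, the hypothesis $\eta<\frac{1}{2(k-1)}$ gives $\|E\|<\tfrac12<1$, so $T=1_k+E$ is invertible; and since $E=E^*$, the spectrum of $T$ lies in $1+[-\|E\|,\|E\|]\subset(\tfrac12,\tfrac32)\subset(0,\infty)$, whence $T$ is positive. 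The bound $\|T-1_k\|=\|E\|\le(k-1)\eta$ is then exactly the second claimed inequality.

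The main obstacle is proving $\|E\|\le(k-1)\eta$, since the crude estimate $\|E\|\le\sum_{i<j}\|E_{ij}\|\le\binom{k}{2}\eta$ is far too weak. To obtain the sharp linear-in-$k$ bound I would use a Schur-test argument. Faithfully represent $\aa\subseteq B(\mathcal H)$, so that $\mm_k(\aa)\subseteq B(\mathcal H^k)$. For $\xi=(\xi_1,\dots,\xi_k)\in\mathcal H^k$,
$$
\|E\xi\|^2=\sum_{i=1}^k\Big\|\sum_{j\ne i}E_{ij}\xi_j\Big\|^2\le\sum_{i=1}^k\Big(\sum_{j=1}^k\|E_{ij}\|\,\|\xi_j\|\Big)^2=\|Ax\|^2,
$$
where $A=(\|E_{ij}\|)_{i,j}\in\mm_k(\mathbb R)$ and $x=(\|\xi_1\|,\dots,\|\xi_k\|)$. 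The scalar matrix $A$ is symmetric, has zero diagonal, and all entries at most $\eta$, so by Gershgorin's theorem (equivalently, the maximal-row-sum bound for the spectral radius of a nonnegative symmetric matrix) its operator norm is at most $(k-1)\eta$. Hence $\|E\xi\|\le(k-1)\eta\,\|\xi\|$, giving $\|E\|\le(k-1)\eta$.

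Finally, for the third inequality I would pass through the continuous functional calculus. Writing $r:=(k-1)\eta<\tfrac12$, we have $\sigma(T)\subseteq[1-r,1+r]$, and $T^{-1/2}-1_k=h(T)$ for the continuous function $h(t)=t^{-1/2}-1$ on $[1-r,1+r]$, so
$$
\|T^{-1/2}-1_k\|=\sup_{t\in\sigma(T)}|h(t)|\le\max_{t\in[1-r,1+r]}|t^{-1/2}-1|.
$$
It then remains an elementary real-variable check that this maximum is at most $2r$. As $t\mapsto t^{-1/2}-1$ is decreasing, $|h|$ attains its maximum at an endpoint, so it suffices to verify $(1-r)^{-1/2}-1\le 2r$ and $1-(1+r)^{-1/2}\le 2r$ for $0\le r<\tfrac12$: the second follows from the convexity estimate $(1+r)^{-1/2}\ge 1-\tfrac r2$, and the first from noting that $g(r)=(1-r)^{-1/2}-1-2r$ satisfies $g(0)=0$ and $g'(r)=\tfrac12(1-r)^{-3/2}-2<0$ on $[0,\tfrac12)$. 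This yields $\|T^{-1/2}-1_k\|\le 2r=2(k-1)\eta$, completing the proof.
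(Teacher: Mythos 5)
Your proof is correct, but it reaches the key spectral localization $\sigma(T)\subset[1-(k-1)\eta,\,1+(k-1)\eta]$ by a different mechanism than the paper. You bound the off-diagonal part directly: writing $T=1_k+E$ with $E=E^*$, your Schur-test estimate $\|E\xi\|\le\|Ax\|$ against the scalar matrix $A=(\|E_{ij}\|)$, combined with Gershgorin (and the fact that the operator norm of a real symmetric matrix is its spectral radius), gives $\|E\|\le(k-1)\eta<\tfrac12$ in one step; invertibility, positivity, and $\|T-1_k\|\le(k-1)\eta$ (which is the \emph{first} claimed inequality, not the second as you label it --- a harmless slip) then all fall out of self-adjointness and the Neumann series. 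The paper argues in the opposite order: for each fixed $\lambda\notin[1-(k-1)\eta,\,1+(k-1)\eta]$ it shows $T-\lambda 1_k$ is bounded below, by comparing the quadratic form $\langle (T-\lambda 1_k)\xi,\xi\rangle$ with the scalar form $f(\|\xi_1\|,\dots,\|\xi_k\|)$ whose positive definiteness comes from the Levy--Desplanques theorem (strict diagonal dominance), and only afterwards reads off the norm bound from the resulting spectral inclusion. The two routes rest on essentially equivalent scalar-matrix facts --- Gershgorin and Levy--Desplanques are two faces of diagonal dominance --- and both pass through a faithful representation of $\mm_k(\aa)$ on a Hilbert space; yours is the more economical, since it yields the norm estimate immediately and avoids both the auxiliary constant $\de>0$ and the ``similarly for $\lambda>1+(k-1)\eta$'' half of the exclusion argument. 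The final step is identical in both proofs --- spectral mapping for $t\mapsto t^{-1/2}$ --- except that you actually verify the endpoint inequalities $(1-r)^{-1/2}-1\le 2r$ and $1-(1+r)^{-1/2}\le 2r$ for $0\le r<\tfrac12$, which the paper asserts without proof; your derivative computation for the first and tangent-line convexity bound for the second are both correct.
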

\begin{proof}
 By the definition of the norm of $\mm_k(\tilde\aa)$, $\|A\|=\|[\pi(A_{ij})]_{k\times k}\|$, for $A=[A_{ij}]_{k\times k}
 \in\mm_k(\tilde\aa)$, where $\pi$ is any faithful representation of $\tilde\aa$ on a Hilbert space $K$ (see \cite{HL}),
 we may assume that $\tilde\aa\subset B(K)$ and the identity operator on $K$ is the unit of $\tilde\aa$. So $T\in B(K_k)$,
 where $K_k=\underbrace {K\oplus\cdots\oplus K}_k$.

 For any $\lambda<1-(k-1)\eta$, set
$$
A=\left[\begin{array}{cccc}1-\lambda&-\|B_{12}\|&\cdots&
-\|B_{1k}\|\\
-\|B_{12}\|&1-\lambda& \cdots & -\|B_{2k}\|\\\vdots&& &\\
-\|B_{1k}\|&-\|B_{2k}\|&\cdots& 1-\lambda\end{array}\right].
$$
Since for any $i$,  $\sum\limits_{i\not=j}\|B_{ij}\|<1-\lambda$, it follows from Levy--Dedplanques Theorem in Matrix
Analysis (see \cite{RR}) that $A$ is positive and invertible. So the quadratic form
$$f(x_1,x_2,\cdots,x_k)=\sum_{i=1}^kx_i^2-2\sum\limits_{1\le i<j\le k}\|B_{ij}\|x_ix_j$$
is positive definite and consequently, there exits $\de>0$ such that for any $(x_1,\cdots,x_k)\in{\mathbb R^n}$,
$f(x_1,\cdots,x_k)\ge \de\big(\sum\limits_{i=1}^k x_i^2\big)$.

Now for any $\xi=(\xi_1,\cdots,\xi_n)\in K_k$, we have
\begin{align*}
<(T-\lambda 1_k)\xi,\xi>&=\sum\limits_{i=1}^k\|\xi_i\|^2+\sum\limits_{1\le i<j\le k}
\Big(<B_{ij}\xi_i,\xi_j>+<B_{ij}^*\xi_j,\xi_i>\Big)\\
&\ge\sum\limits_{i=1}^k\|\xi_i\|^2-2\sum\limits_{1\le i<j\le k}\|B_{ij}\|\|\xi_i\|\|\xi_j\|\\
&=f(\|\xi_1\|,\cdots,\|\xi_k\|)\ge\de(\sum_{i=1}^k\|\xi_i\|^2)
\end{align*}
by above argument. Thus, $T-\lambda 1_k$ is invertible. Similarly, for any $\lambda >1+(k-1)\eta$, $T-\lambda 1_k$ is
also invertible.

Let $\sigma(T)$ denote the spectrum of $T$. Then we have
$$
\sigma(T)\subset[1-(k-1)\eta,1+(k-1)\eta]\subset (0,2),
$$
This indicates that $T$ is positive and invertible. Finally, by the Spectrum Mapping Theorem,
$\sigma(1_k-T)\subset [-(k-1)\eta,(k-1)\eta]$ and
\begin{align*}
\sigma(1_k-T^{-1/2})&\subset[1-(1-(k-1)\eta)^{-1/2},1-(1+(k-1)\eta)^{-1/2}]\\
&\subset[-2(k-1)\eta,2(k-1)\eta].
\end{align*}
So $\|T-1_k\|\le (k-1)\eta$ and $\|T^{-1/2}-1_k\|\le 2(k-1)\eta.$
\end{proof}

\begin{definition}\label{def1}
We say that a unital $C^*$--algebra $\mathcal E$ is generated by $n$ $(n\ge 2)$ mutually unitarily equivalent and almost
mutually orthogonal  projections if for any given $\ep>0$, there exist projections $p_1,\cdots, p_n$ in $\mathcal E$
satisfying following conditions:
\begin{enumerate}
\item[$(1)$] $p_1+\cdots+p_n$ is invertible in $\mathcal E$,
\item[$(2)$] $C^*(p_1,\cdots, p_n)=\mathcal E$ and
\item[$(3)$] for any $i\not=j$, $p_i$ is unitarily equivalent to $p_j$ in $\mathcal E$ and $\|p_ip_j\|<\ep$.
\end{enumerate}
\end{definition}

Now we present one of our main results as follows.
\begin{theorem}\label{th1}
Suppose that the $C^*$-algebra $\aa$ is generated $n$ elements $a_1,\cdots, a_n$. Then for each $k\ge
\de(n)=\min\big\{k\in\mathbb N\,\vert\,(k-1)(k-2)\ge 2n\big\}$, $\mm_{k}(\tilde{\aa})$ is generated by $k$ mutually
unitarily equivalent and almost mutually orthogonal projections.
\end{theorem}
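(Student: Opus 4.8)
The plan is to realize the $k$ projections as the rank-one projections $p_i=g_i(g_i^*g_i)^{-1}g_i^*$ attached to carefully chosen columns $g_1,\dots,g_k\in\tilde\aa^{\,k}$, where I read an element $g=[g_1\,|\cdots|\,g_k]\in\mm_k(\tilde\aa)$ as a family of columns and $g_i^*g_j\in\tilde\aa$ as the Hilbert-module inner product of two columns. After rescaling I may assume $\|a_m\|\le 1$. Fix a small parameter $\ep>0$ and set $g_k=e_k$, so that $p_k=e_{kk}$ is an honest matrix unit serving as an anchor. Since $\binom{k-1}{2}=\tfrac{(k-1)(k-2)}{2}\ge n$ holds exactly when $k\ge\de(n)$, I can injectively assign the generators to the upper-triangular slots $(i,j)$ with $1\le i<j\le k-1$; writing $c_{ij}$ for the assigned generator (or $0$ on unused slots), I put $g_i=e_i+\ep e_k+\ep\sum_{i<j\le k-1}c_{ij}e_j$ for $1\le i\le k-1$. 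In this way every generator occurs, the coupling of each $g_i$ to the anchor $e_k$ is through the \emph{scalar} coefficient $\ep$, and the generators sit strictly above the diagonal of the top $(k-1)$-block.

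Conditions (1) and (3) should come quickly. The Gram matrix $T=[g_i^*g_j]$ has diagonal entries $1+O(\ep^2)$ and off-diagonal entries of norm $O(\ep)$, so for $\ep$ small Lemma~\ref{lem1}, applied to the normalized columns $g_i(g_i^*g_i)^{-1/2}$ whose Gram matrix has unit diagonal, shows that $T$ is positive and invertible with $\|T-1_k\|$ small; since $T=g^*g$ and $g$ is square, $g$ is invertible, and hence $\sum_i p_i$, built from $gg^*$, is invertible. The same estimate gives $\|p_ip_j\|=O(\ep)<\ep'$, so the $p_i$ are almost mutually orthogonal. Finally each $p_i$ is Murray--von Neumann equivalent to $e_{11}$ and each $1-p_i$ has the complementary rank, so the $p_i$ are mutually unitarily equivalent in $\mm_k(\tilde\aa)$.

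The heart of the matter, and the step I expect to be the main obstacle, is the generation statement (2): $\mathcal B:=C^*(p_1,\dots,p_k)=\mm_k(\tilde\aa)$. I would first reduce it to two claims: that $\mathcal B$ contains all matrix units $e_{ij}\otimes1$, and that the corner $\mathcal D:=(e_{11}\otimes1)\mathcal B(e_{11}\otimes1)$ exhausts $\tilde\aa$; indeed, once the matrix units are available one has $\mathcal B=\mm_k(\mathcal D)$, so everything is governed by the corner. To produce the matrix units I would compute, for $1\le i\le k-1$, the product $p_kp_i=\ep\,e_{ki}\otimes d_i+\ep^2 e_{kk}\otimes d_i+\ep^2\sum_{j>i}e_{kj}\otimes(d_ic_{ij}^*)$, where $d_i=(g_i^*g_i)^{-1}$, and then run a downward induction on $i$. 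The base case $i=k-1$ is clean because $g_{k-1}$ carries no generator, so $d_{k-1}$ is a scalar and the term $e_{k,k-1}\otimes1$ is isolated at once; for smaller $i$ the matrix units $e_{jj}\otimes1$ with $j>i$ already in hand let me split off and discard the generator-bearing tails $e_{kj}\otimes(d_ic_{ij}^*)$ and retain $e_{ki}\otimes d_i$.

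The one genuinely delicate point is that the normalizers $d_i$ are not scalars but elements of $\tilde\aa$ depending on the generators, so recovering the bare matrix unit $e_{ki}\otimes1$ from $e_{ki}\otimes d_i$ is not immediate. Here I would use that $p_kp_ip_k=e_{kk}\otimes\ep^2 d_i$ places $d_i$, and hence (since a unital $C^*$-subalgebra is inverse-closed and $d_i$ is invertible in $\tilde\aa$) also $s_i=d_i^{-1}$, inside the corner; multiplying $e_{ki}\otimes d_i$ on the left by $e_{kk}\otimes s_i$ then yields $e_{ki}\otimes1\in\mathcal B$. The same inverse-closedness, applied to $e_{11}\otimes d_ic_{ij}^*\in\mathcal B$, gives $c_{ij}^*=s_i(d_ic_{ij}^*)\in\mathcal D$, so every generator lies in $\mathcal D$ and $\mathcal D=\tilde\aa$. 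The triangular placement of the generators is precisely what keeps this induction well-founded, which is the structural reason the bound $\de(n)$ takes the form $(k-1)(k-2)\ge 2n$.
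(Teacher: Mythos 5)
Your proposal is correct in substance, but it takes a genuinely different route from the paper. The paper does not use column (range) projections at all: it forms the single matrix $T_\epsilon$ --- essentially your Gram matrix, with the generators placed with coefficient $\epsilon$ in the upper triangle of the top $(k-1)$-block and scalar $\epsilon$'s coupling every index to the anchor index $k$ --- and defines $p_i(\epsilon)=T_\epsilon^{1/2}(1\otimes e_{ii})T_\epsilon^{1/2}$, which is a projection because the $(i,i)$ entry of $T_\epsilon$ is $1$. The payoff of that symmetric choice is that $\sum_i p_i(\epsilon)=T_\epsilon$ \emph{exactly}, so a single application of functional calculus inside $C^*(p_1(\epsilon),\dots,p_k(\epsilon))$ produces $T_\epsilon^{-1/2}$ and hence all diagonal units $1\otimes e_{ii}=T_\epsilon^{-1/2}p_i(\epsilon)T_\epsilon^{-1/2}$ simultaneously; the generators and off-diagonal units are then read off in one line, since $(1\otimes e_{ii})T_\epsilon(1\otimes e_{jj})$ equals $B_{ij}\otimes e_{ij}$ and $(1\otimes e_{ii})T_\epsilon(1\otimes e_{kk})$ equals $1\otimes e_{ik}$, up to the scalar $\epsilon$. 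Your anchor $p_k=e_{kk}$ plus downward induction, with spectral permanence invoked to strip the non-scalar normalizers $d_i=(g_i^*g_i)^{-1}$, is a correct workaround for the fact that your $\sum_i p_i=gD^{-1}g^*$ is not itself the element on which one wants to do functional calculus; the paper's construction makes that entire inductive layer, and the inverse-closedness device, unnecessary. Both arguments share the same combinatorics (triangular placement of generators forcing $(k-1)(k-2)\ge 2n$, scalar coupling to the $k$-th slot so that bare matrix units can be isolated) and the same endgame for condition (3): closeness $\|p_i-1\otimes e_{ii}\|=O(\epsilon)<1$ together with the standard fact that close projections are unitarily equivalent, which is exactly the paper's appeal to Lemma 6.5.9 of \cite{X}.

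Two local repairs you should make, though neither is fatal. First, the inference ``$T=g^*g$ is invertible and $g$ is square, hence $g$ is invertible'' is not valid in a $C^*$-algebra (a non-unitary isometry is a counterexample already at $k=1$); it is rescued here only because your specific $g$ is triangular with units on the diagonal, hence honestly invertible --- or, more simply, because $\bigl\|\sum_i p_i-1_k\bigr\|=O(\epsilon)<1$. Second, ``each $1-p_i$ has the complementary rank'' is not meaningful over a general $\tilde{\mathcal A}$; replace it by the norm estimate $\|p_i-1\otimes e_{ii}\|=O(\epsilon)<1$, which yields unitary equivalence of $p_i$ with $1\otimes e_{ii}$ (and of the complements), after which the diagonal units are permuted by scalar unitaries. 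With these two patches your proof is complete.
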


\begin{proof}
We assume that $\aa$ is non--unital. If $\aa$ is unital, $\tilde\aa=\aa$.
Without loss generality, we may assume that $\|a_i\|=1$, $i=1,\cdots, n$.
Furthermore, we can assume $n=\frac{(k-1)(k-2)}{2}$. Otherwise, for any $n<i\le \frac{(k-1)(k-2)}{2}$,
put $a_i=1$, where $1$ is the unit of $\tilde{\aa}$.

Rewrite $\{a_1,\cdots,a_n\}=\{B_{ij}:1\le i<j\le k-2\}$ (for $\de(n)\ge 3$) and define
 $$
 T_\ep=\left[\begin{array}{ccccc}
 1&\ep B_{12}&\cdots&\ep B_{1,k-1}&\ep 1
 \\\ep B_{12}^* &1 & \cdots&\ep B_{2,k-1}&\ep 1
 \\\cdots&\cdots&\cdots&\cdots&\cdots
 \\\ep B_{1,k-1}^*&\ep B^*_{2,k-1}&\cdots& 1& \ep 1
 \\\ep 1&\ep 1&\cdots&\ep 1& 1
\end{array}\right],
\quad\forall\,\ep\in (0,1/8(k-1)).
$$

Using the canonical matrix units $\{e_{ij}\}$ for $\mm_k(\mathbb C)$, we have
\begin{align*}
T_\epsilon&=\sum_{i=1}^{k}\big(1\otimes e_{ii})+\sum_{i=1}^{k-1}(\ep 1\otimes e_{i,k}+\ep 1\otimes e_{k,i}\big)\\
&\ +\sum_{1\le i<j\le k-1}\big(\ep B_{ij}\otimes e_{ij}+\ep B_{ij}^*\otimes e_{ji}\big).
\end{align*}

By Lemma \ref{lem1}, $T_\ep$ is positive and invertible with $\|1_k-T_\ep\|\le (k-1)\ep$ and $\|1_k-T_\ep^{-1/2}\|
\le 2(k-1)\ep$.

Define $p_i(\ep)=T_\ep^{1/2}(1\otimes e_{ii})T_\ep^{1/2}$, $i=1,\cdots, k$. It is easy to verify that $p_i(\ep)$ is a
projection and $C^*(p_1(\ep),\cdots,p_k(\ep))\subset \mm_{k}(\tilde{\aa})$. In the following, we will show
$\mm_k(\tilde{\aa})\subset C^*(p_1(\ep),\cdots,p_k(\ep))$.

For all $1\le i\le k$, $p_i(\ep)\in \pk$ implies $T_\ep=\sum\limits_{i=1}^{k}p_i(\ep)$ is contained in
$\pk$. Then $T_\ep^{-1/2}\in\pk$ by Gelfand's Theorem (cf. \cite[Theorem 1.5.10]{X}), which implies that
for any $1\le i\le k$,
$$
1\otimes e_{ii}=T_\ep^{-1/2}p_i(\ep)T_\ep^{-1/2}\in\pk.
$$
It follows that for any $1\le i<j\le k-1,$
$$
B_{ij}\otimes e_{ij}=(1\otimes e_{ii})T_\ep(1\otimes e_{jj})\in\pk
$$
and for any $1\le i\le k-1$,
$$
1\otimes e_{ik}=(1\otimes e_{ii})T_\ep(1\otimes e_{kk})\in\pk.
$$
So $1\otimes e_{ki}=(1\otimes e_{ik})^*\in\pk$
and hence, for any $1\le i<j\le k-1$,
$$
1\otimes e_{ij}=(1\otimes e_{ii})(1\otimes e_{ik})(1\otimes e_{kj})\in\pk
$$
and
$1\otimes e_{ji}=(1\otimes e_{ij})^*\in\pk.$ Consequently, for any $1\le i<j\le k$ and $1\le m\le k$,
$$
B_{ij}\otimes e_{mm}=(1\otimes e_{mi})(B_{ij}\otimes e_{ij})(1\otimes e_{jm})\in\pk.
$$
Since for $i=1,\cdots, k$, $\tilde{\aa}\otimes e_{ii}$ is a $C^*$--algebra, we get for $1\le i\le k$,
$\tilde{\aa}\otimes e_{ii}\subset\pk$
and for $1\le i,j\le k$,
$$
\tilde{\aa}\otimes e_{ij}=(\tilde{\aa}\otimes e_{ii})(1\otimes e_{ij})\subset\pk.
$$

At last, we obtain that $ \mm_{k}(\tilde{\aa})\subset \pk$.

Put $I_i=1\otimes e_{ii}=T_\ep^{-1/2}p_i(\ep)T_\ep^{-1/2}$, $i=1,\cdots, k$. Then $\{I_1,\cdots, I_k\}$ is a family of mutually equivalent
and mutually orthogonal projections in $\pk$. Now for $1\le i,\,j\le k$, $i\not=j$,
\begin{align*}
\|p_j(\ep)-I_j\|&\le\|(1_k-T_\ep^{-1/2})p_j(\ep)\|+\|p_j(\ep)T_\ep^{-1/2}(1_k-T_\ep^{-1/2})\|<8(k-1)\ep<1\\
\|p_i(\ep)p_j(\ep)\|&\le\|p_i(\ep)(p_j(\ep)-I_j)\|+\|(p_i(\ep)-I_i)I_j\|<16(k-1)\ep.
\end{align*}
So $p_j(\ep)$ is unitarily equivalent to $I_j$ by Lemma 6.5.9 of \cite{X}, then to $p_i(\ep)$ and
$p_1(\ep),\cdots, p_k(\ep)$ are almost mutually orthogonal in $\pk$.\qed
\end{proof}

\begin{example}
{\rm
(1) Since $\mathbb C$ is generated by $\{1\}$, it follows from Theorem \ref{th1} that for any $k\ge 3$,
$\mm_k({\mathbb C})$ is generated by $k$ mutually unitarily equivalent and almost mutually orthogonal projections.

(2) Let $\mathcal B$ be a separable unital $C^*$--algebra and $\mathcal K$ be the $C^*$--algebra of compact operators
on the separable complex Hilbert space $H$. Then $\mathcal B\otimes\mathcal K$ is generated by a single element
(cf. \cite[Theorem 8]{OZ}). So $\mm_3(\widetilde{\mathcal B\otimes\mathcal K})$ is generated by $3$ mutually
unitarily equivalent and almost mutually orthogonal projections.
}
\end{example}

\begin{remark}\label{rem1}
{\rm Suppose that the $C^*$--algebra $\mathcal E$ with the unit $1_\mathcal E$ is generated by $k$ mutually unitarily
equivalent and almost mutually orthogonal projections. Then by Definition \ref{def1}, there are projections
$p_1,\cdots,p_k$ such that $\sum\limits^k_{i=1}p_i$ is invertible in $\mathcal E$, $p_1,\cdots,p_k$ are mutually unitarily
equivalent in $\mathcal E$ and $\|p_ip_j\|<1/2(k-1)$. Then by Corollary 3.8 of \cite{HX} and its proof, there exist
mutually orthogonal projections $p_1',\cdots,p_k'$ in $\mathcal E$ such that $\|p_i-p_i'\|<1$ and
$\sum\limits^k_{i=1}p_i'=1_\mathcal E$. Consequently, $p_i$ is unitarily equivalent to $p_i'$ in $\mathcal E$ by
\cite[Lemma 6.5.9 (2)]{X} and so that $p_i'$ is unitarily equivalent to $p_j'$ in $\mathcal E$, $i,j=1,\cdots,k$.

Now we use the $K$--Theory of $\mathcal E$ to describe above situations. The notations and properties of $K$--Theory of
$C^*$--algebras can be found in references \cite{HL} and \cite{X}. Let $[p_i]$ (resp. $[p_i']$) be the class of $p_i$
(resp. $[p_i']$) in $K_0(\mathcal E)$, $i=1,\cdots,k$. Then we have
$[1_\mathcal E]=[\sum\limits^k_{i=1}p_i']=\sum\limits^k_{i=1}[p_i']=k[p_1]$.
}
\end{remark}
\section{Some applications}
\setcounter{equation}{0}
Let $\aa$ be a $C^*$--algebra and let $M$ be a subset of $\aa_{sa}$. We call $M$ a generator of $\aa$ if $\aa$
is equal to the $C^*$--algebra  $C^*(M)$ generated by elements in $M$. If $M$ is finite, then we call $\aa$ finitely
generated and we define the number of generators $gen(A)$ by the minimum cardinality of $M$ which generates $\aa$.
We denote $gen(\aa)=\infty$ unless $\aa$ is finitely generated (cf. \cite{Na}). We call a $C^*$--algebra $\aa$ singly generated if
$gen(\aa)\le 2$. Indeed, if $\aa=C^*(\{x,y\})$ for $x, y\in\aa_{sa}$, then $C^*(x+i\,y)=\aa$.

\begin{lemma}{\rm{\cite[Theorem 3]{Na}}}\label{lem2}
Let $\aa$ be a unital $C^*$--algebra with $gen(\aa)\le n^2 + 1$ $(n\in\mathbb N)$. Then we have $gen(\mm_n(\aa))\le 2.$
\end{lemma}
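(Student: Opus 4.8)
The plan is to exhibit two self-adjoint elements of $\mm_n(\aa)$ that generate it; by the discussion preceding the lemma this is exactly $gen(\mm_n(\aa))\le 2$, since a pair of self-adjoint generators $x,y$ is the same datum as the single generator $x+iy$. After adjoining a unit if necessary and rescaling, I may assume $\aa$ is unital and generated by self-adjoint contractions $x_1,\dots,x_{n^2+1}$. Identifying $\mm_n(\aa)$ with $\aa\otimes\mm_n(\mathbb C)$ and writing $e_{ij}$ for the standard matrix units, the whole algebra is generated as soon as $C^*(h,k)$ contains (i) a full system of matrix units $u_{ij}$ with $\sum_i u_{ii}=1$ and (ii) a copy of every generator $x_m$ in the corner $e_{11}\mm_n(\aa)e_{11}\cong\aa$: for then that corner, which is isomorphic to $\aa$, lies in $C^*(h,k)$, and the matrix units spread it across the whole algebra.

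For the first element I would use a single diagonal matrix that carries structure and generators simultaneously. Fixing a large scalar $\mu$ (say $\mu=3$), set
\[
h=\sum_{i=1}^n (i\mu\,1+x_i)\otimes e_{ii}.
\]
Since $h$ is diagonal its spectrum is the union of the clusters $i\mu+\sigma(x_i)\subseteq[i\mu-1,\,i\mu+1]$, which are pairwise disjoint; applying to $h$ a continuous function equal to $1$ on the $i$-th cluster and $0$ on the others returns exactly $1\otimes e_{ii}$. Hence $C^*(h)$ contains every diagonal matrix unit, and moreover $e_{ii}h\,e_{ii}-i\mu\,e_{ii}=x_i\otimes e_{ii}$ recovers the first $n$ generators at the same time. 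The only price is keeping $h$ diagonal, and this free recovery of the $e_{ii}$ is what makes the tight generator count attainable.

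The second self-adjoint element $k$ must carry the remaining $n^2-n+1$ generators and restore the off-diagonal structure. Having the $e_{ii}$ in hand, I read each entry of $k$ off as $e_{ii}k\,e_{jj}=k_{ij}\otimes e_{ij}$. I would place the generators $x_{n+1},\dots,x_{n^2+1}$ into the entries of $k$ (each off-diagonal position absorbing two generators as real and imaginary parts, each diagonal position one self-adjoint generator), while arranging a spanning family of positions to be invertible, e.g.\ by adding a large scalar multiple of $1$. Polar decomposition together with functional calculus inside each corner $e_{ii}\mm_n(\aa)e_{ii}$ then converts each invertible entry into a partial isometry implementing $e_{ii}\sim e_{jj}$; composing these along a spanning tree produces the desired system of matrix units, after which every extracted $k_{ij}$ can be transported into the corner $e_{11}\mm_n(\aa)e_{11}$, yielding (i) and (ii) and hence $C^*(h,k)=\mm_n(\aa)$.

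The main obstacle is the bookkeeping forced by the sharpness of the bound. A self-adjoint matrix in $\mm_n(\aa)$ has exactly $n^2$ independent self-adjoint slots ($n$ on the diagonal and two per upper off-diagonal entry), so once the diagonal of $h$ is spent on recovering the $e_{ii}$, the connectivity positions can no longer be wasted on pure scalars: they must simultaneously encode generators. The delicate point is therefore to show that from an invertible entry of the form $k_{ij}=c\,1+y_{ij}$ one can disentangle and recover the stored generator $y_{ij}$ cleanly in a corner while still producing the honest matrix unit, and to check that the positions can be assigned so that all $n^2+1$ generators are accommodated with exactly enough slack left for a spanning tree of invertible entries. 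It is precisely this accounting that pins the hypothesis at $gen(\aa)\le n^2+1$.
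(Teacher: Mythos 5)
Your skeleton is the standard one (the paper itself does not prove this lemma; it imports it from \cite[Theorem 3]{Na}, and Nagisa's argument likewise uses a diagonal self-adjoint element with separated spectral clusters plus one more self-adjoint element), and your $h$-step is correct: the clusters $i\mu+\sigma(x_i)$ are disjoint, functional calculus yields every $1\otimes e_{ii}$, and $e_{ii}he_{ii}-i\mu\, e_{ii}=x_i\otimes e_{ii}$ recovers the first $n$ generators. The genuine gap is the connectivity step, which you flag as ``the delicate point'' but never close, and as set up it fails. If a connecting entry is a general invertible $k_{ij}=c1+y_{ij}+iz_{ij}$ (your scheme of ``each off-diagonal position absorbing two generators''), its polar part $u_{ij}=k_{ij}(k_{ij}^*k_{ij})^{-1/2}$ is a \emph{nontrivial} unitary, so functional calculus puts $u_{ij}\otimes e_{ij}$ into $C^*(h,k)$, not $1\otimes e_{ij}$. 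Composing along a spanning tree gives only twisted partial isometries $u\otimes e_{1j}$ with unknown, position-dependent unitaries $u$; transporting data to the $(1,1)$ corner then yields only conjugates such as $u_ik_{pq}u_j^*$, and no mechanism disentangles $y_{ij},z_{ij}$ (or the $u$'s) from these. This is not mere bookkeeping: $C^*\big(1\otimes e_{11},\,1\otimes e_{22},\,w\otimes e_{12}\big)$ has $(1,1)$ corner equal to the \emph{commutative} algebra $C^*(1,ww^*)$, so for $w=c1+y+iz$ with small noncommuting $y,z$ it is a proper subalgebra of $\mm_2(C^*(1,w))$ --- twisted isometries plus diagonal projections alone do not spread the stored generators through the algebra.

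The repair --- and the actual accounting that pins the hypothesis at $n^2+1$ --- is to make the $n-1$ connecting entries \emph{self-adjoint}: set $k_{i,i+1}=c1+y_i$ with $y_i=y_i^*$ a generator and $c>\|y_i\|$, so that this entry is positive and invertible and its polar part is exactly $1$. Taking $g$ continuous with $g(0)=0$ and $g(t)=t^{-1/2}$ on $\sigma(k_{i,i+1}^2)$, one gets
\begin{equation*}
1\otimes e_{i,i+1}=(k_{i,i+1}\otimes e_{i,i+1})\cdot g\big(k_{i,i+1}^2\otimes e_{i+1,i+1}\big)\in C^*(h,k),
\end{equation*}
an honest matrix unit; products along the superdiagonal give all $1\otimes e_{ij}$, every remaining strictly upper entry may carry two generators and is recovered untwisted in the $(1,1)$ corner via $1\otimes e_{1p}$ and $1\otimes e_{q1}$, and $y_i=k_{i,i+1}-c1$ is recovered from the entry itself. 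The slot count is then exactly $n$ (diagonal of $h$) $+\;n$ (diagonal of $k$) $+\;(n-1)$ (self-adjoint superdiagonal) $+\;(n-1)(n-2)$ (the remaining off-diagonal entries) $=n^2+1$, matching the hypothesis precisely; your all-complex off-diagonal scheme has capacity $n^2+n$, and that overshoot is a symptom that your accounting was not the one forcing the bound --- the sharpness comes from sacrificing one self-adjoint slot per connecting entry to make its polar part trivial.
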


Similar to the definition of $gen(\aa)$, we have following definition:
\begin{definition}\label{def2}
Let $\aa$ be a finitely generated unital $C^*$--algebra. We define the number $\Pgen(\aa)$ to be least integer $k\ge 2$
such that $\aa$ is generated by $k$ mutually unitarily equivalent and almost mutually orthogonal projections.

If no such $k$ exists, we set $\Pgen(\aa)=\infty$.
\end{definition}

\begin{remark}\label{rem2}
{\rm (1) There is a finitely generated unital $C^*$--algebra $\aa$ such that $\Pgen(\aa)=2$. For example, take
$\aa=\mm_2(\mathbb C)$ and projections
$$
p_1=\begin{bmatrix}1&0\\ 0&0\end{bmatrix},\quad
p_2=\begin{bmatrix}\ep&\sqrt{\ep(1-\ep)}\ \\ \sqrt{\ep(1-\ep)} &1-\ep \end{bmatrix},\ \forall\,\ep\in (0,1).
$$
Clearly, $p_1$ and $p_2$ are unitarily equivalent, $p_1+p_2$ is invertible and $\|p_1p_2\|\le\ep^{1/2}$. Moreover,
it is easy to check that $C^*(p_1,p_2)=\aa$. Thus, $\Pgen(\aa)=2$.

(2) If the unital $C^*$--algebra $\aa$ is infinite--dimensional and simple, then $\Pgen(\aa)\ge 3$. In fact, if
$\aa$ is  generated by two mutually unitarily equivalent and almost mutually orthogonal projections $p_1$ and $p_2$,
then there is a $*$--homomorphism $\pi\colon C^*(p,q)\rightarrow\aa$ such that $\pi(p)=p_1$ and $\pi(q)=p_2$. Thus,
$\aa=\pi(C^*(p,q))$ and hence $\aa$ is of Type $I$. But it is impossible since $\aa$ is infinite--dimensional
and simple.
}
\end{remark}

Now we present main result (2) mentioned in the end of \S 1.
\begin{proposition}\label{prop1}
Assume that the unital $C^*$--algebra $\aa$ is generated by $n$ self--adjoint elements. Then for any
$l\ge\{\sqrt{n-1}\}$ and $k\ge 3$, $\Pgen(\mm_{kl}(\aa))\le k$.
\end{proposition}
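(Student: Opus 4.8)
The plan is to decompose $\mm_{kl}(\aa)$ as $\mm_k\big(\mm_l(\aa)\big)$ and then apply Theorem \ref{th1} to the inner algebra $\mm_l(\aa)$, after first cutting its number of generators down to a single element. Since $\aa$ is unital, so is $\mm_l(\aa)$, and therefore $\widetilde{\mm_l(\aa)}=\mm_l(\aa)$; this lets me dispense with any worry about unitization in the final application of Theorem \ref{th1}.

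First I would bound the number of self-adjoint generators of $\mm_l(\aa)$. By hypothesis $\aa$ is generated by $n$ self-adjoint elements, so $gen(\aa)\le n$. The assumption $l\ge\{\sqrt{n-1}\}$ gives $l\ge\sqrt{n-1}$, hence $l^2\ge n-1$, that is, $n\le l^2+1$. Combining these, $gen(\aa)\le l^2+1$, so Lemma \ref{lem2} (Nagisa's theorem), applied with matrix size $l$, yields $gen(\mm_l(\aa))\le 2$. In other words $\mm_l(\aa)$ is singly generated: writing $\mm_l(\aa)=C^*(\{x,y\})$ with $x,y$ self-adjoint, I would set $b=x+i\,y$ and invoke the observation recorded in \S 3 to conclude $\mm_l(\aa)=C^*(b)$, a $C^*$-algebra generated by one element.

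Next I would feed $\mathcal B:=\mm_l(\aa)=C^*(b)$ into Theorem \ref{th1} with generator count equal to $1$. The relevant threshold is $\de(1)=\min\{k\in\mathbb N\,\vert\,(k-1)(k-2)\ge 2\}=3$, since $(3-1)(3-2)=2$. Hence for every $k\ge 3$, Theorem \ref{th1} guarantees that $\mm_k(\tilde{\mathcal B})=\mm_k(\mathcal B)$ is generated by $k$ mutually unitarily equivalent and almost mutually orthogonal projections. As $\mm_k(\mathcal B)=\mm_k(\mm_l(\aa))=\mm_{kl}(\aa)$, this says precisely that $\Pgen(\mm_{kl}(\aa))\le k$, which is the assertion.

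The argument is essentially a bookkeeping chain, so there is no genuine analytic obstacle; the only places demanding care are the two arithmetic matchings. One must check that the hypothesis $l\ge\{\sqrt{n-1}\}$ is exactly what triggers Lemma \ref{lem2}, through the inequality $n\le l^2+1$, and that passing from the two self-adjoint generators $x,y$ to the single generator $x+i\,y$ really drops the generator count from $2$ to $1$. This last reduction is what pushes the threshold down from $\de(2)=4$ to $\de(1)=3$, and it is precisely this drop that permits the full range $k\ge 3$ rather than only $k\ge 4$.
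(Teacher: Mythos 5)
Your proposal is correct and follows essentially the same route as the paper: the decomposition $\mm_{kl}(\aa)=\mm_k(\mm_l(\aa))$, Lemma \ref{lem2} via the inequality $n\le l^2+1$ to make $\mm_l(\aa)$ singly generated (with the $x+i\,y$ reduction from two self-adjoint generators to one element), and then Theorem \ref{th1} with $\de(1)=3$. Your write-up merely makes explicit two details the paper leaves implicit, namely the unitality observation $\widetilde{\mm_l(\aa)}=\mm_l(\aa)$ and the passage from $gen(\mm_l(\aa))\le 2$ to a single generator.
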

\begin{proof} Since $l\ge\sqrt{n-1}$ and $l^2+1\ge n\ge gen(\aa)$, it follow from Lemma \ref{lem2} that $\mm_l(\aa)$
is singly generated. In this case, $\de(1)=3$. So for any $k\ge 3$, $\mm_{kl}(\aa)=\mm_k(\mm_l(\aa))$ is generated by
$k$ mutually unitarily equivalent and almost mutually orthogonal projections Theorem \ref{th1}.\qed
\end{proof}

Since simple $A\!F$ $C^*$--algebra and the irrational rotation algebra are all singly generated by \cite{Na},
we have by Proposition \ref{prop1}:
\begin{corollary}\label{cor1}
If $\aa$ is a simple unital A\!F $C^*$--algebra or an irrational rotation algebra, then $\Pgen(\mm_3(\aa))\le 3$.
\end{corollary}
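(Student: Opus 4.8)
The plan is to reduce the statement to a single invocation of Proposition \ref{prop1}, after recording the single--generation facts supplied by Nagisa. First I would recall that both a simple unital $A\!F$ $C^*$--algebra and an irrational rotation algebra are singly generated, so that $gen(\aa)\le 2$; equivalently, $\aa$ is generated by $n=2$ self--adjoint elements. This is the only external input, and it is exactly what \cite{Na} provides, so no independent argument is needed at this point.

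With $n=2$ in hand, the key step is a numerical check: one computes $\{\sqrt{n-1}\}=\{\sqrt{1}\}=1$, so the hypothesis $l\ge\{\sqrt{n-1}\}$ of Proposition \ref{prop1} is already satisfied by $l=1$. Taking $l=1$ and $k=3$, Proposition \ref{prop1} yields $\Pgen(\mm_{kl}(\aa))=\Pgen(\mm_3(\aa))\le 3$, which is precisely the assertion. I would make the collapse $\mm_{kl}(\aa)=\mm_{3}(\aa)$ explicit so that the claimed bound drops out transparently rather than as a larger matrix amplification.

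As a sanity check I would also note that one can bypass Proposition \ref{prop1} and appeal to Theorem \ref{th1} directly: a singly generated unital $\aa$ is generated by a single element, and $\de(1)=\min\{k:(k-1)(k-2)\ge 2\}=3$, so $\mm_{3}(\tilde\aa)$ is generated by three mutually unitarily equivalent and almost mutually orthogonal projections. Since $\aa$ is unital, $\tilde\aa=\aa$, and again $\Pgen(\mm_3(\aa))\le 3$. The agreement of the two routes is reassuring and confirms that $k=3$ is exactly the admissible value.

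There is no genuine obstacle in the proof itself; the substance lies entirely in the cited single--generation results together with Theorem \ref{th1} and Proposition \ref{prop1}, all already established. The only point requiring care is the bookkeeping around the definition of single generation: one must use that $gen(\aa)\le 2$ corresponds to $n=2$ self--adjoint generators, guaranteeing $l=1$ is admissible and hence that $\mm_{kl}(\aa)$ reduces to $\mm_3(\aa)$.
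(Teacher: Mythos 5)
Your proof is correct and follows the paper's own route exactly: the paper likewise cites \cite{Na} for single generation and then applies Proposition \ref{prop1} with $n=2$, $l=1$, $k=3$ to get $\Pgen(\mm_3(\aa))\le 3$. Your alternative check via Theorem \ref{th1} with $\de(1)=3$ is just Proposition \ref{prop1} unwound in the case $l=1$, so it is the same argument, not a different one.
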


\begin{corollary}\label{coro2}
Let $X$ be a compact metric space with $\dim X\le m$. If $X$ can be embedded into $\mathbb C^m$, then
$\Pgen(\mm_{3k}(C(X)))\le 3$, where $k=\{\sqrt{2m-1}\}$. In general, $\Pgen(\mm_{3s}(C(X)))\le 3$, where
$s=\{\sqrt{2m}\}$.
\end{corollary}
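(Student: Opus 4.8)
The plan is to reduce both assertions to Proposition \ref{prop1} applied with $k=3$: once we know that $C(X)$ is generated by $n$ self--adjoint elements, taking $l=\{\sqrt{n-1}\}$ immediately gives $\Pgen(\mm_{3l}(C(X)))\le 3$. Thus the whole task is to count how many self--adjoint generators $C(X)$ admits, and this is governed by embeddings of $X$ into a Euclidean (equivalently, complex) space.

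For the first statement, suppose $X$ embeds into $\mathbb C^m$ and let $z_1,\dots,z_m$ denote the complex coordinate functions restricted to $X$. The embedding separates the points of $X$, so the unital $*$--subalgebra they generate is dense in $C(X)$ by the Stone--Weierstrass theorem; equivalently, the $2m$ real functions $\mathrm{Re}\,z_1,\mathrm{Im}\,z_1,\dots,\mathrm{Re}\,z_m,\mathrm{Im}\,z_m$ generate $C(X)$. Hence $C(X)$ is generated by $n=2m$ self--adjoint elements, and since $\{\sqrt{n-1}\}=\{\sqrt{2m-1}\}=k$, Proposition \ref{prop1} with this value of $l$ and $k=3$ yields $\Pgen(\mm_{3k}(C(X)))\le 3$.

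For the general statement I would invoke the classical Menger--N\"obeling embedding theorem: a compact metric space $X$ with $\dim X\le m$ embeds into $\mathbb R^{2m+1}$. Then the $2m+1$ real coordinate functions, restricted to $X$, separate points and hence generate $C(X)$ by Stone--Weierstrass, so $C(X)$ is generated by $n=2m+1$ self--adjoint elements. Now $\{\sqrt{n-1}\}=\{\sqrt{2m}\}=s$, and Proposition \ref{prop1} once more with $k=3$ and $l=s$ gives $\Pgen(\mm_{3s}(C(X)))\le 3$.

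The proof is short once the generator count is in hand; the only genuine input beyond Proposition \ref{prop1} is the embedding theory. The main obstacle is therefore the general case, where one must rely on the optimal embedding dimension $2m+1$ for an $m$--dimensional compact metric space rather than a naive bound. The first case is easier precisely because the hypothesis hands us an embedding into $\mathbb C^m$, i.e.\ into $\mathbb R^{2m}$, for free, which saves one self--adjoint generator and improves the matrix size from $3s$ to $3k$.
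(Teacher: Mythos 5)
Your proof is correct and follows essentially the same route as the paper: both arguments bound the number of self--adjoint generators of $C(X)$ by $2m$ (resp.\ $2m+1$) using the given embedding into $\mathbb C^m$ (resp.\ the embedding of $X$ into $\mathbb R^{2m+1}$, the paper's citation \cite[Theorem III.4.2]{AN}), and then apply Proposition \ref{prop1} with $k=3$ and $l=\{\sqrt{n-1}\}$. The only cosmetic difference is that you re-derive the generator count via Stone--Weierstrass, whereas the paper simply quotes Nagisa's formula $gen(C(X))=\min\{m\in\mathbb N\,\vert\,\text{there is an embedding of }X\text{ into }\mathbb R^m\}$ from \cite[Proposition 2]{Na}.
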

\begin{proof} By \cite[Proposition 2]{Na},
$$
gen(C(X))=\min\{m\in\mathbb N\,\vert\, \text{there is an embedding of}\ X\ \text{into}\ \mathbb R^m\}.
$$
Therefore, if $X$ can be embedded into $\mathbb C^m$, then $gen(C(X))\le 2m$ and in general,
$X$ can be embedded into $\mathbb R^{2m+1}$ by \cite[Theorem III.4.2]{AN}. In this case, $gen(C(X))\le 2m+1$.

So the assertions follow from Proposition \ref{prop1}.\qed

\end{proof}

Recall that a projection $p$ in a $C^*$--algebra $\aa$ is infinite if there is a projection $q$ in $\aa$ with
$q<p$ such that $p$ and $q$ are equivalent (denoted by $p\sim q$) in the sense of Murray--von Neumann. $\aa$ is called to be purely infinite
if the closure of $a\aa a$ contains an infinite projection for every non--zero positive element $a$ in $\aa$
(cf. \cite{Cu}).

\begin{proposition}\label{prop2}
Let $\aa$ be a separable purely infinite simple $C^*$--algebra with the unit $1_\aa$. Suppose the class $[1_\aa]$ in
$K_0(\aa)$ has torsion. Let $m$ be the order of $[1_\aa]$. Then $3\le\Pgen(\aa)\le\min\{k\in\mathbb N\vert\,k\ge 3,
(k,m)=1\}$.

In particular, when $m$ has the form $m=3n-1$ or $m=3n-2$ for some $n\in\mathbb N$, $\Pgen(\aa)=3$.
\end{proposition}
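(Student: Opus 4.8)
The plan is to prove the two bounds separately. The lower bound $\Pgen(\aa)\ge 3$ is immediate from Remark \ref{rem2}(2): a purely infinite simple $C^*$--algebra is infinite--dimensional and simple, hence cannot be generated by two mutually unitarily equivalent and almost mutually orthogonal projections. All the work is in the upper bound, and the idea is to realize $\aa$ as a $k\times k$ matrix algebra over a singly generated corner and then invoke Theorem \ref{th1}.

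Fix $k\ge 3$ with $(k,m)=1$. First I would carry out the number--theoretic step. Since $m[1_\aa]=0$ in $K_0(\aa)$ and $(k,m)=1$, B\'ezout's identity gives integers $a,b$ with $ak+bm=1$, so that $[1_\aa]=ak[1_\aa]+bm[1_\aa]=k\,(a[1_\aa])$. Thus $[1_\aa]=kx$ with $x=a[1_\aa]\in K_0(\aa)$. Because $\aa$ is unital purely infinite simple, every class in $K_0(\aa)$ is represented by a nonzero projection of $\aa$; choose $p\in\aa$ with $[p]=x$, so that $[p^{\oplus k}]=k[p]=[1_\aa]$.

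Next I would build a system of matrix units out of this $K_0$--identity. By Cuntz's theory of purely infinite simple $C^*$--algebras, two nonzero projections in any matrix amplification are Murray--von Neumann equivalent as soon as they carry the same $K_0$--class; applied to $p^{\oplus k}$ and $1_\aa$ inside $\mm_k(\aa)$, this produces a partial isometry implementing $p^{\oplus k}\sim 1_\aa$. Reading off its single nonzero row yields elements $w_1,\dots,w_k\in\aa$ with $w_iw_j^*=\delta_{ij}\,p$ and $\sum_{i=1}^k w_i^*w_i=1_\aa$; setting $v_{ij}=w_i^*w_j$ then gives a system of $k\times k$ matrix units with $\sum_i v_{ii}=1_\aa$, whence $\aa\cong\mm_k(\mathcal B)$ for $\mathcal B=v_{11}\aa v_{11}\cong p\aa p$. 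As a full corner of a separable purely infinite simple $C^*$--algebra, $\mathcal B$ is itself separable, unital and purely infinite simple, hence singly generated by the generation results quoted in the introduction. Since $\de(1)=3\le k$, Theorem \ref{th1} shows that $\mm_k(\mathcal B)\cong\aa$ is generated by $k$ mutually unitarily equivalent and almost mutually orthogonal projections, i.e.\ $\Pgen(\aa)\le k$. Minimizing over the admissible $k$ yields $\Pgen(\aa)\le\min\{k\in\mathbb N\,\vert\,k\ge 3,\ (k,m)=1\}$. For the final assertion, if $m=3n-1$ or $m=3n-2$ then $3\nmid m$, so $(3,m)=1$ and $k=3$ is admissible, forcing $\Pgen(\aa)\le 3$; combined with the lower bound this gives $\Pgen(\aa)=3$.

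I expect the decomposition step to be the main obstacle: converting the purely $K$--theoretic identity $[1_\aa]=k[p]$ into an actual orthogonal decomposition $1_\aa=\sum_{i=1}^k p_i$ of the unit into mutually equivalent projections. This is exactly where pure infiniteness and simplicity are used, and some care is needed because a nonzero projection can have vanishing $K_0$--class; the cleanest route is to pass to the equivalence $p^{\oplus k}\sim 1_\aa$ in $\mm_k(\aa)$ and extract the $w_i$, rather than trying to split off one orthogonal copy of $p$ at a time.
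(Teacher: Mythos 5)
Your proposal is correct, and it reaches the upper bound by a genuinely different decomposition than the paper. The paper never divides $[1_\aa]$ by $k$ in $K_0(\aa)$: instead it picks $c,d\in\mathbb N$ with $kc-md=1$, notes $r=kc\equiv 1\bmod m$, and invokes \cite[Lemma 1]{Xue} to produce $r$ isometries $s_1,\dots,s_r\in\aa$ with orthogonal ranges summing to $1_\aa$, giving $\aa\cong\mm_r(\aa)=\mm_k(\mm_c(\aa))$; single generation comes from applying \cite[Theorem 2.3]{TW} to $\aa$ itself, after which Proposition \ref{prop1} (with $l=c$) finishes. You instead use B\'ezout plus torsion to write $[1_\aa]=k[p]$ directly, realize $p^{\oplus k}\sim 1_\aa$ in $\mm_k(\aa)$ via Cuntz's classification of projections in purely infinite simple algebras (every $K_0$--class is attained by a nonzero projection, and $K_0$ determines Murray--von Neumann equivalence), and extract matrix units to get $\aa\cong\mm_k(p\aa p)$, then apply Theorem \ref{th1} with $\de(1)=3$. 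Your route is more conceptual at the $K_0$ level, but note what the paper's detour through $r=kc$ buys: its corner is $\mm_c(\aa)$, a matrix algebra over $\aa$, so single generation of the corner follows from single generation of $\aa$ alone, whereas you must additionally know that the corner $p\aa p$ is again separable, unital, purely infinite simple (true, since nonzero corners of purely infinite simple algebras are purely infinite simple, and they are full by simplicity) in order to apply the Thiel--Winter result to it. Your matrix-unit extraction is sound --- since $w_iw_i^*=p$ is a projection, each $w_i$ is a partial isometry and $v_{ij}=w_i^*w_j$ does satisfy the matrix-unit relations with $\sum_i v_{ii}=1_\aa$ --- modulo the cosmetic slip that with $V^*V=1_\aa\otimes e_{11}$ the nonzero entries of $V$ form a column, not a row. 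The lower bound and the final parity observation ($3\nmid m$ when $m=3n-1$ or $3n-2$) agree with the paper verbatim.
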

\begin{proof} According to Remark \ref{rem2} (2), $\Pgen(\aa)\ge 3$.

Since $(k,m)=1$, $s,\,t\in\mathbb Z$ such that $ks-mt=1$ (cf. \cite{IR}). Let $c=s+ml$ and $d=t+kl$.
Then $kc-md=1$, $\forall\,l\in\mathbb N$. So we can choose $c,\,d\in\mathbb N$ such that $kc-md=1$. Set $r=kc$.

Since $r\equiv 1\!\mod{m}$, it follows from \cite[Lemma 1]{Xue} that there exist isometries $s_1,\cdots, s_r$ in $\aa$
such that
\begin{equation}\label{3a}
s_i^*s_j=0,\ i\not=j,\ i,\,j=1,\cdots, r\ \text{and}\ \sum\limits^r_{i=1}s_is_i^*=1_\aa.
\end{equation}
Define a linear map $\phi\colon\aa\rightarrow\mm_k(\aa)$ by $\phi(a)=[s_i^*as_j]_{r\times r}$. It is easy to check that
$\phi$ is a $*$--homomorphism and injective by using (\ref{3a}). Now let $A=[a_{ij}]_{r\times r}\in\mm_r(\aa)$ and
put $a=\sum\limits_{i,j=1}^{r}s_ia_{ij}s_j^*\in\aa$. Then $\phi(a)=A$ in terms of (\ref{3a}). Therefore, $\phi$ is a
$*$--isomorphism and $\aa$ is $*$--isomorphic to $\mm_r(\aa)$.

Now by Theorem 2.3 of \cite{TW}, $gen(\aa)\le 2$. Thus, by Proposition \ref{prop1}, for above $k\ge\de(1)=3$,
$c\ge 1$, $\mm_{kc}(\aa)$ is generated by $k$ mutually unitarily equivalent and almost mutually orthogonal projections
and consequently, $\Pgen(\aa)\le k$.

When $m$ has the form $m=3n-1$ or $m=3n-2$ for some $n\in\mathbb N$, $(3,m)=1$. In this case, $\Pgen(\aa)=3$ by
above argument.\qed
\end{proof}

\begin{example}
{\rm Let $\oo_n$ ($2\le n\le+\infty)$ be the Cuntz algebra. $\oo_n$ is a separable purely infinite simple unital
$C^*$--algebra with $K_0(\oo_n)\cong\begin{cases}\mathbb Z/(n-1)\mathbb Z,\ &2\le n<+\infty\\ \quad\mathbb Z, &n=+\infty
\end{cases}$ and the generator $[1_{\oo_n}]$ (cf. \cite{Cu}). Then we have
\begin{enumerate}
\item[(1)] $\Pgen(\oo_\infty)=+\infty$ by Remark \ref{rem1}.
\item[(2)] $\Pgen(\oo_n)=3$ if $n=3m$ or $n=3m-1$ for some $m\in\mathbb N$ by Proposition \ref{prop2}.
\item[(3)] $\Pgen(\oo_n)=\min\{k\in\mathbb N\vert\,k\ge 3,\ (k,n-1)=1\}$. In fact, Proposition \ref{prop2} shows that
$\Pgen(\oo_n)\le\min\{k\in\mathbb N\vert\,k\ge 3,\ (k,n-1)=1\}$. Now, $\Pgen(\oo_n)=m$ implies that there
is a projection $e\in\oo_n$ such that $m[e]=1$ in $K_0(\oo_n)$ by Remark \ref{rem1}. So there exists $s\in\mathbb N$ such that
$[e]=s[1_{\oo_n}]$. Then $ms-1\equiv 0\!\mod (n-1)$ and hence $(m,n-1)=1$.

For example: $\Pgen(\oo_{4})=4$,  $\Pgen(\oo_{13})=5$, $\Pgen(\oo_{211})=11$, etc..
\end{enumerate}
}
\end{example}

According to \cite{BKR}, a unital separable  $C^*$--algebra $\aa$ with the unit $1_\aa$ is approximately divisible if, for every
$x_1,\cdots,x_n\in A$ and any $\ep> 0$, there is a finite--dimensional $C^*$--subalgebra $\mathcal B$ with unit $1_\aa$
of $\aa$ such that $\mathcal B$ has no Abelian central projections and $\|x_iy-yx_i\|<\ep\|y\|$, $\forall\, 1\le i\le n$
and $y\in\mathcal B$.

\begin{proposition}\label{prop3}
Suppose that two separable and unital $C^*$--algebras $\aa$ and $\mathcal B$ satisfies following conditions:
\begin{enumerate}
\item[$(1)$] $\aa$ or $\mathcal B$ is nuclear$;$
\item[$(2)$] there is an integer $k\ge 3$ and a unital $C^*$--algebra $\mathcal C$ such that $\mathcal B\cong
\mm_k(\mathcal C);$
\item[$(3)$] $\aa\otimes\mathcal B$ is approximately divisible.
\end{enumerate}
Then $\Pgen(\aa\otimes\mathcal B)\le k$. Furthermore, if $k\equiv 0\!\mod 3$, then $\Pgen(\aa\otimes\mathcal B)\le 3$.
\end{proposition}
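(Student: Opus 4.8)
The plan is to reduce the statement to Theorem~\ref{th1} by rewriting $\aa\otimes\mathcal B$ as a matrix algebra over a singly generated base. First, using conditions $(1)$ and $(2)$, I would pin down the $C^*$-tensor product: since one of $\aa,\mathcal B$ is nuclear the product $\aa\otimes\mathcal B$ is unambiguous, and from $\mathcal B\cong\mm_k(\mathcal C)$ together with the canonical identifications $\aa\otimes\mm_k(\mathcal C)\cong\aa\otimes\mm_k(\mathbb C)\otimes\mathcal C\cong\mm_k(\aa\otimes\mathcal C)$ I obtain a $*$-isomorphism $\aa\otimes\mathcal B\cong\mm_k(\mathcal D)$, where $\mathcal D:=\aa\otimes\mathcal C$ is a separable unital $C^*$-algebra. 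It therefore suffices to exhibit $k$ (resp.\ three) mutually unitarily equivalent and almost mutually orthogonal projections that generate $\mm_k(\mathcal D)$.

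The heart of the argument is to prove that $\mathcal D$ is singly generated. Here I would first transport hypothesis $(3)$ down to $\mathcal D$: the algebras $\mathcal D$ and $\mm_k(\mathcal D)$ are stably isomorphic (indeed $\mm_k(\mathcal D)\otimes\mathcal K\cong\mathcal D\otimes\mathcal K$, and $\mathcal D$ is a full corner of $\mm_k(\mathcal D)$), and approximate divisibility is invariant under stable isomorphism, matrix amplification and passage to full corners by \cite{BKR}. Hence the approximate divisibility of $\aa\otimes\mathcal B=\mm_k(\mathcal D)$ forces $\mathcal D$ to be approximately divisible as well. Since $\mathcal D$ is separable and unital, the generator theorem of Thiel and Winter \cite{TW} then applies to $\mathcal D$ and yields $gen(\mathcal D)\le 2$; that is, $\mathcal D$ is generated by a single element.

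Now I would apply Theorem~\ref{th1} with $n=1$. As $\de(1)=\min\{k\in\mathbb N\mid(k-1)(k-2)\ge 2\}=3\le k$ and $\mathcal D$ is unital (so $\tilde{\mathcal D}=\mathcal D$), Theorem~\ref{th1} shows that $\mm_k(\mathcal D)$ is generated by $k$ mutually unitarily equivalent and almost mutually orthogonal projections; transporting through the isomorphism of the first paragraph yields $\Pgen(\aa\otimes\mathcal B)\le k$. For the final assertion, suppose $k=3m$ and view $\mm_k(\mathcal D)=\mm_3(\mm_m(\mathcal D))$. Since $\mathcal D$ is approximately divisible, so is its amplification $\mm_m(\mathcal D)$ by \cite{BKR}, whence $gen(\mm_m(\mathcal D))\le 2$ by \cite{TW}; applying Theorem~\ref{th1} with $n=1$ and matrix size $3$ (again $\de(1)=3$) to the unital algebra $\mm_m(\mathcal D)$ produces three mutually unitarily equivalent and almost mutually orthogonal generators of $\mm_3(\mm_m(\mathcal D))\cong\aa\otimes\mathcal B$, so $\Pgen(\aa\otimes\mathcal B)\le 3$.

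The step I expect to be delicate is the descent of approximate divisibility in the second paragraph: the amplification direction is routine, but passing the property from $\mm_k(\mathcal D)$ back to the smaller algebra $\mathcal D$ relies on its invariance under stable isomorphism (equivalently, under full corners), and some care is needed to confirm that the form of this invariance recorded in \cite{BKR} is available in the separable unital setting used here. Once this descent and the single-generation input of \cite{TW} are secured, the remaining steps are formal consequences of Theorem~\ref{th1}.
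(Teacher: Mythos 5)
Your proposal is correct and takes essentially the same route as the paper: the paper also rewrites $\aa\otimes\mathcal B\cong\mm_k(\aa\otimes\mathcal C)$, descends approximate divisibility to $\aa\otimes\mathcal C$ by citing exactly \cite[Corollary 2.9]{BKR} (the step you flag as delicate), deduces single generation, and concludes via Proposition~\ref{prop1} (your direct use of Theorem~\ref{th1} with $n=1$ is the same computation, $\de(1)=3$), including the $k=3t$ case. The only nominal difference is that for single generation the paper cites \cite[Theorem 3.1]{LS}, which states precisely that unital separable approximately divisible $C^*$--algebras are singly generated, whereas your appeal to \cite{TW} would additionally require the known fact that approximate divisibility implies $\mathcal Z$--stability.
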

\begin{proof} If $\mathcal B$ is nuclear, applying \cite[Proposition 2.3.8]{HL} to $\mm_k(\mathcal C)$ , we get that
$\mathcal C$ is also nuclear since $\mathcal C$ is a hereditary $C^*$--subalgebra of $\mm_k(\mathcal C)$.

Now from $\aa\otimes\mathcal B\cong\mm_k(\aa\otimes\mathcal C)$, we get that $\aa\otimes\mathcal C$ is approximately
divisible by \cite[Corollary 2.9]{BKR}. Since every unital separable approximately divisible $C^*$--algebra
is singly generated by \cite[Theorem 3.1]{LS}, we obtain that $\aa\otimes\mathcal B$ is generated by $k$ mutually
unitarily equivalent and almost mutually orthogonal projections, by applying Proposition \ref{prop1} to
$\aa\otimes\mathcal C$.

If $k=3t$ for some $t\in\mathbb N$, then $\Pgen(\mm_{3t}(\aa\otimes\mathcal C))\le 3$ by Proposition \ref{prop1}.
Thus, $\Pgen(\aa\otimes\mathcal B)\le 3$ for $\aa\otimes\mathcal B\cong\mm_k(\aa\otimes\mathcal C)$.\qed
\end{proof}

Which type of $C^*$--algebras satisfy Condition (2) and (3) of Proposition \ref{prop3}?  For $A\!F$--algebras,
we have the following:

\begin{proposition}\label{prop4}
Let $\aa=\overline{\bigcup\limits^\infty_{n=1}\aa_n}$ be a $A\!F$--algebra with unit $1_\aa$, where $\aa_n$ is a
finite--dimensional $C^*$--algebra with the unit $1_\aa$ such that $\aa_m\subset\aa_n$, $\forall\,m\le n$, $m,\,n=1,2,
\cdots $. Assume that $\aa$ satisfies following conditions:
\begin{enumerate}
\item[$(1)$] no quotient of $\aa$ has an abelian projection, especially, $\aa$ is infinite dimensional simple$;$
\item[$(2)$] there is an integer $n\ge 3$ and an element $a$ in $K_0(\aa)$ such that $na=[1_\aa]$ in $K_0(\aa)$.
\end{enumerate}
If there is $k\ge 3$ such that $n\equiv 0\!\mod k$, then $\aa$ is generated by $k$ mutually unitarily equivalent and almost mutually orthogonal projections.
\end{proposition}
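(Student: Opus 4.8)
The plan is to reduce the statement to Theorem~\ref{th1} by writing $\aa$ as a $k\times k$ matrix algebra over a singly generated corner. First I would exploit condition~(2) together with $n\equiv 0\!\mod k$. Setting $b=(n/k)\,a\in K_0(\aa)$ (note $n/k\in\mathbb N$), we get $kb=na=[1_\aa]$. Since $\aa=\overline{\bigcup_n\aa_n}$ is AF, $K_0(\aa)=\varinjlim K_0(\aa_n)$, and an equality of two elements in an inductive limit already holds at a finite stage; because every $\aa_n$ contains $1_\aa$, there is an $N$ at which $k b_N=[1_\aa]_N$ in $K_0(\aa_N)\cong\mathbb Z^{r_N}$, where $[1_\aa]_N=(d_{N,1},\dots,d_{N,r_N})$ records the matrix sizes of the summands of $\aa_N$. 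Comparing coordinates forces $k\mid d_{N,j}$ and $b_N=(d_{N,1}/k,\dots,d_{N,r_N}/k)$, with $0\le d_{N,j}/k\le d_{N,j}$. Hence inside each matrix summand of $\aa_N$ I can split the identity into $k$ orthogonal, mutually equivalent subprojections of rank $d_{N,j}/k$; summing over $j$ yields $e_1,\dots,e_k\in\aa_N\subset\aa$ that are mutually orthogonal, mutually Murray--von Neumann equivalent and satisfy $\sum_{i=1}^k e_i=1_\aa$. Choosing partial isometries implementing these equivalences produces a system of $k\times k$ matrix units summing to $1_\aa$, so that $\aa\cong\mm_k(\mathcal C)$ with $\mathcal C=e_1\aa e_1$ a unital separable (AF) $C^*$-algebra.

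Next I would show $\mathcal C$ is singly generated. Condition~(1) is precisely the hypothesis under which the AF algebra $\aa$ is approximately divisible, by the results of Blackadar--Kumjian--R\o rdam \cite{BKR}. Since $\aa\cong\mm_k(\mathcal C)$, it then follows from \cite[Corollary 2.9]{BKR} (used in the same way as in Proposition~\ref{prop3}) that $\mathcal C$ is itself approximately divisible. As $\mathcal C$ is unital, separable and approximately divisible, \cite[Theorem 3.1]{LS} gives that $\mathcal C$ is singly generated, i.e. $gen(\mathcal C)\le 2$. (In the ``especially'' case where $\aa$ is infinite-dimensional and simple, the corner $\mathcal C$ is again infinite-dimensional simple AF, so one may instead quote directly that such algebras are singly generated \cite{Na}.)

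Finally I would feed this into Theorem~\ref{th1}. Viewing $\mathcal C$ as generated by a single element, we have $\de(1)=\min\{k\in\mathbb N\,\vert\,(k-1)(k-2)\ge 2\}=3$, so for every $k\ge 3$ Theorem~\ref{th1} shows that $\mm_k(\tilde{\mathcal C})=\mm_k(\mathcal C)$ is generated by $k$ mutually unitarily equivalent and almost mutually orthogonal projections. Because $\aa\cong\mm_k(\mathcal C)$, this is exactly the assertion. I expect the main obstacle to be the middle step, namely extracting approximate divisibility of $\aa$ (and hence of $\mathcal C$) from the purely structural hypothesis ``no quotient has an abelian projection''; this is what licenses the single-generation input. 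The K-theoretic splitting in the first step is routine once one descends to a finite stage of the AF system, and the last step is a direct invocation of Theorem~\ref{th1}.
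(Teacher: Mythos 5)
Your proof is correct, and its overall skeleton matches the paper's: reduce to $\aa\cong\mm_k(\mathcal C)$ for a corner $\mathcal C$, get approximate divisibility of $\aa$ from condition (1) via \cite{BKR} and pass it to $\mathcal C$ by \cite[Corollary 2.9]{BKR}, invoke \cite[Theorem 3.1]{LS} for single generation, and finish with Theorem \ref{th1} (via $\de(1)=3$). Where you genuinely diverge is in how the matrix decomposition is obtained. The paper stays in the limit algebra: it uses \cite[Proposition 3.4.5]{HL} to see $a\in K_0(\aa)_+$, represents $a$ by a projection $p$ over some $\aa_m$, diagonalizes, then uses the cancellation property in $\mm_{ns}(\aa)$ together with \cite[Lemma 3.4.2]{HL} to produce $n$ mutually orthogonal, mutually equivalent projections $r_1,\dots,r_n$ summing to $1_\aa$, so that $\aa\cong\mm_n(r_1\aa r_1)$; it then feeds this $n$-fold decomposition into Proposition \ref{prop3}, which handles the regrouping $n=kt$. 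You instead replace $a$ by $b=(n/k)a$ so as to split into $k$ pieces directly, and you prove the splitting at a \emph{finite stage} of the AF system: by continuity of $K_0$ the relation $kb_N=[1_\aa]_N$ holds in some $K_0(\aa_N)\cong\mathbb Z^{r_N}$ (the unitality of the inclusions, which the hypotheses guarantee, is what lets you compare with $[1_{\aa_N}]$), and torsion-freeness of $\mathbb Z^{r_N}$ forces $k$ to divide every matrix size of $\aa_N$, whence explicit orthogonal equivalent subprojections of $1_\aa$ inside $\aa_N$. This buys you something real: positivity of $b_N$ is automatic from the coordinates, so you avoid \cite[Proposition 3.4.5]{HL}, and you avoid the cancellation argument and \cite[Lemma 3.4.2]{HL} entirely, since everything is elementary linear algebra in a finite-dimensional algebra; you also bypass Proposition \ref{prop3} by quoting its ingredients directly. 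The paper's route, by contrast, reuses general machinery already deployed elsewhere in the paper and yields the slightly stronger structural fact $\aa\cong\mm_n(\mathcal C)$ (the full $n$-fold division of the unit), matching the hypotheses of Proposition \ref{prop3} verbatim; your argument is specific to AF algebras but is more self-contained at exactly the step you flagged as routine.
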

\begin{proof} By \cite[Proposition 3.4.5]{HL}, $a\in K_0(\aa)_+$ (the positive cone of $K_0(\aa)$). So we can find
a projection $p$ in $\mm_s(\aa_m)$ for some $s,\,m\in\,\mathbb N$ such that $[p]=a$ in $K_0(\aa)$. Consequently,
there are projections $p_1,\cdots,p_s$ in $\aa_m$ such that $p$ is unitarily equivalent to $\mathrm{diag}(p_1,\cdots,p_s)$
in $\mm_s(\aa_m)$. This indicates that
\begin{equation}\label{3eqb}
[\mathrm{diag}(\underbrace{p_1,\cdots,p_1}_{n},\cdots,\underbrace{p_s,\cdots,p_s}_{n})]=[1_\aa]\quad\text{in}\ K_0(\aa).
\end{equation}
Since $\mm_t(\aa)$ has the cancellation property of projections for all $t\in\mathbb N$, we have
\begin{equation}\label{3eqc}
\mathrm{diag}(\underbrace{p_1,\cdots,p_1}_{n},\cdots,\underbrace{p_s,\cdots,p_s}_{n})\sim\mathrm{diag}(1_\aa,
\underbrace{0,\cdots,0}_{ns-1})\quad \text{in}\ \mm_{ns}(\aa)
\end{equation}
by (\ref{3eqb}). Applying \cite[Lemma 3.4.2]{HL} to (\ref{3eqc}), we can find mutually orthogonal projections
$q_1,\cdots,q_{ns}$ in $\aa$ such that $q_{(i-1)s+1},\cdots,q_{is}$ are all unitarily equivalent to
$p_i$, $1\le i\le n$ in $\aa$.

Put $r_i=\sum\limits_{j=1}^s q_{(i-1)s+j}\in\aa$, $i=1,\cdots,n$. Then $r_ir_j=0$, $r_i\sim r_j$ and $[r_i]=[p]$
in $K_0(\aa)$, $i\not=j$, $i,j=1,\cdots,n$. So from $[r_1+\cdots+r_s]=[1_\aa]$ in $K_0(\aa)$, we obtain
$\sum\limits^s_{i=1}r_i=1_\aa$.

Let $v_i$ be partial isometries in $\aa$ such that $v_1=r_1$ and $r_1=v_i^*v_i$, $r_i=v_iv_i^*$, $r_iv_i=v_ir_1$ when
$2\le i\le n$. Define a linear mapping
$\psi\colon\aa\rightarrow\mm_n(r_1\aa r_1)$ by $\psi(a)=[v_i^*av_j]_{n\times n}$. In terms of $v_i^*v_j=0$, $i\not=j$,
$i,j=1,\cdots,n$ and $\sum\limits^n_{i=1}v_iv_i^*=1_\aa$, it is easy to check that $\psi$ is a $*$--isomorphism,
that is, $\aa$ satisfies Condition (2) of Proposition \ref{prop3}.

By \cite[Proposition 4.1]{BKR}, Condition (1) implies that $\aa$ is approximately divisible. So the assertion follows
from Proposition \ref{prop3}. \qed
\end{proof}

\begin{example}
{\rm Let $\mathcal B$ be a $U\!H\!F$--algebra. It is in one--one correspondence with a generalized integer, formal
products $q=\prod\limits^\infty_{j=1}p_j^{n_j}$ for some $\{n_j\}^\infty_{j=1}\subset \mathbb Z_+\cup\{+\infty\}$,
where $\{p_1,p_2,\cdots\}$ is the set of all positive prime numbers listed in increasing order.
According to \cite[7.4]{RLL}, $K_0(\mathcal B)$ is isomorphic to
$\big\{\frac{x}{\,y\,}\vert\,x\in\mathbb Z, y\in\mathbb N, q\equiv 0\!\mod y\big\}=\mathbb Z_{(q)}$
with $[1_\mathcal B]$ in correspondence with $1$, where  $q\equiv 0\!\!\mod y$ means that
$y=\prod\limits^\infty_{j=1}p_j^{m_j}$ for some $m_j\in\mathbb Z_+$ with $m_j\le n_j$, $j=1,\cdots,\infty$ and $m_j>0$
for only finitely many $j$.

Put $k=\min\{n\in\mathbb N\vert\,n\ge 3, q\equiv 0\!\mod n\}$. Clearly, there is $a\in K_0(\mathcal B)$ such that $ka=[1_\aa]$.
Thus there is a unital $C^*$--algebra $\mathcal C$ such that $\mathcal B\cong\mm_k(\mathcal C)$ (see the proof of
Proposition \ref{prop4}). Since $\mathcal B$ and $\aa\otimes\mathcal B$ are all approximately divisible
for any unital separable $C^*$--algebra $\aa$ by \cite{BKR}, it follows from Proposition \ref{prop3} that
$\mathcal B$ and $\aa\otimes\mathcal B$ are all generated by $k$ mutually unitarily equivalent and almost mutually
orthogonal projections, i.e., $\Pgen(\mathcal B)\le k$ and $\Pgen(\aa\otimes\mathcal B)\le k$.

Moreover, we have $\Pgen(\mathcal B)=\min\{n\in\mathbb N\vert\,n\ge 3, q\equiv 0\!\mod n\}$. In fact, since $\mathcal B$
is simple and infinite--dimensional, it follows from Remark \ref{rem2} that $\Pgen(\mathcal B)\ge 3$. Let
$m=\Pgen(\mathcal B)$. Then there is a projection $e$ in $\mathcal B$ such that $m[e]=[1_\mathcal B]$. Thus, there are
$x,\,y\in\mathbb Z_+$ with $q\equiv 0\!\mod y$ such that $m\frac{x}{\,y\,}=1$ and consequently, $q\equiv 0\!\mod m$.
So $\Pgen(\mathcal B)\ge\min\{n\in\mathbb N\vert\,n\ge 3, q\equiv 0\!\mod n\}$.

For example, if $\mathcal B$ is a $U\!H\!F$ algebra of Type $2^\infty$ or $3^\infty$, respectively, then
$\Pgen(\mathcal B)=4$ or $\Pgen(\mathcal B)=3$.
}
\end{example}

Finally, similar to Davis' result in \cite{Da} and Sunder' work in \cite{Su}, We have

\begin{proposition}\label{prop5}
Let $H$ be a separable infinite dimensional Hilbert space. Then  for any $k\ge 3$ there are $k$  mutually unitarily
equivalent and almost mutually orthogonal  projections $P_1,\cdots, P_k$ such that
$$
\mathcal K\subset C^*(P_1,\cdots,P_k)\subset W^*(P_1,\cdots,P_k)=B(H).
$$
\end{proposition}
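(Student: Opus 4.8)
The plan is to apply Theorem \ref{th1} to the algebra of compact operators $\mathcal K=\mathcal K(H)$ itself. First I would record that $\mathcal K$ is singly generated: a weighted unilateral shift $W$ with distinct nonzero weights tending to $0$ is compact, and $W^*W$ is a diagonal compact operator with distinct eigenvalues, so $C^*(W)$ contains all the rank--one spectral projections, hence all matrix units $e_{ij}$; since these generate the finite--rank operators densely and $W\in\mathcal K$, one gets $C^*(W)=\mathcal K$ (this is also the special case $\mathcal B=\mathbb C$ of the result invoked in Example 2.5(2)). Thus $\mathcal K$ is generated by $n=1$ element, and $\de(1)=\min\{k\in\mathbb N\,\vert\,(k-1)(k-2)\ge 2\}=3$.

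Next, for any fixed $k\ge 3$ I would invoke Theorem \ref{th1} with $\aa=\mathcal K$: for each small $\ep>0$ there are projections $p_1(\ep),\dots,p_k(\ep)$ in $\mm_k(\tilde{\mathcal K})$ that are mutually unitarily equivalent, satisfy $\|p_i(\ep)p_j(\ep)\|<16(k-1)\ep$, and generate $\mm_k(\tilde{\mathcal K})$, i.e. $C^*(p_1(\ep),\dots,p_k(\ep))=\mm_k(\tilde{\mathcal K})$. Fixing one such $\ep$ once and for all produces the candidate projections.

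The remaining work is to realize everything concretely on $H$ and verify the two inclusions. Since $H$ is separable and infinite dimensional, I fix a unitary $U\colon H^k\to H$ (where $H^k=H\oplus\dots\oplus H$) and set $P_i=U\,p_i(\ep)\,U^*$, viewing $\mm_k(\tilde{\mathcal K})\subset\mm_k(B(H))=B(H^k)$. Conjugation by $U$ preserves all projection and unitary--equivalence relations, so $P_1,\dots,P_k$ remain mutually unitarily equivalent and almost mutually orthogonal, and $C^*(P_1,\dots,P_k)=U\,\mm_k(\tilde{\mathcal K})\,U^*\subset B(H)$. For the left inclusion, $\mm_k(\mathcal K)=\mathcal K(H^k)\subset\mm_k(\tilde{\mathcal K})$ gives $\mathcal K=U\,\mathcal K(H^k)\,U^*\subset C^*(P_1,\dots,P_k)$. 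For the weak closure, $\mathcal K(H^k)$ contains all finite--rank operators on $H^k$ and so is strongly dense in $B(H^k)$; hence $W^*(P_1,\dots,P_k)\supseteq U\,(\mathcal K(H^k))''\,U^*=U\,B(H^k)\,U^*=B(H)$, and since it is a subalgebra of $B(H)$ equality follows.

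I do not expect a genuine obstacle here: once $\mathcal K$ is identified as a singly generated $C^*$--algebra, Theorem \ref{th1} does the heavy lifting and the statement reduces to bookkeeping with the identifications $B(H)\cong B(H^k)\cong\mm_k(B(H))$ together with the elementary fact that the compacts generate $B(H)$ as a von Neumann algebra. The only point requiring a little care is keeping the unitary $U\colon H^k\to H$ explicit, so that the three objects $\mathcal K$, $C^*(P_1,\dots,P_k)$ and $W^*(P_1,\dots,P_k)$ all live on the same space $H$.
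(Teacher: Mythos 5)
Your proof is correct and follows essentially the same route as the paper's: apply Theorem \ref{th1} to a singly generated $C^*$--algebra realized on $H$, then transport the resulting projections from $\mm_k(\cdot)\subset B(H^k)$ back to $B(H)$ via a unitary $H^k\cong H$, using that the compacts sit inside the generated $C^*$--algebra and are weakly dense in $B(H)$. The only difference is the choice of base algebra: the paper takes the unital Toeplitz algebra $C^*(S)$ (with $\mathcal K\subset C^*(S)\subset W^*(S)=B(H)$) and builds the unitary explicitly from isometries $S_1,\cdots,S_k$ satisfying $S_i^*S_j=0$ and $\sum_{i=1}^k S_iS_i^*=I$, whereas you take $\aa=\mathcal K$ itself, singly generated by a weighted shift, and work with its unitization $\tilde{\mathcal K}$ and an arbitrary unitary $U\colon H^k\to H$ --- both give $\mm_k(\mathcal K)\subset C^*(Q_1,\cdots,Q_k)$ and the same bookkeeping thereafter.
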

\begin{proof}
Take $H=l^2$ and let $S$ be the unilateral shift on $H$. It's well--known that
$\mathcal K\subset C^*(S)\subset W^*(S)=B(H)$ (cf. \cite {HL}). Then there are $k$ mutually unitarily equivalent and
almost mutually orthogonal projections $Q_1,\cdots, Q_k$ in $\mm_k(C^*(S))$ such that
$ C^*(Q_1,\cdots, Q_k)=\mm_k(C^*(S))$ by Theorem \ref{th1}.

Choose isometry operators $S_1,\cdots, S_k$ on $H$ such that $S^*_iS_j=0$, $i\not=j$, $i,\,j=1,\cdots, k$ and
$\sum\limits^k_{i=1}S_iS_i^*=I$. Define a unitary operator $W\colon H\rightarrow\bigoplus\limits^k_{i=1}H$ by
$Wx=(S_1^*x,\cdots, S_k^*x)$, $\forall\,x\in H$. Then $W^*(\mm_k(\mathcal K))W=\mathcal K$ and
$W^*(\mm_k(B(H)))W=\mathcal B(H)$. Put $P_i=W^*Q_iW$, $i=1,\cdots, k$. Then $P_1,\cdots, P_k$ are mutually unitarily
equivalent and almost mutually orthogonal and $W^*(\mm_k(C^*(S)))W=C^*(P_1,\cdots, P_k)$.
So from
$$
\mm_k(\mathcal K)\subset C^*(Q_1,\cdots, Q_k)\subset W^*(Q_1,\cdots,Q_k)=\mm_k(B(H)),
$$
we obtain the assertion.\qed
\end{proof}

\vskip0.2cm

\noindent{\bf{Acknowledgement.}} The authors thank to Professor Huaxin Lin and the referee for their helpful comments
and suggestions.

\enddocument